\newtheorem{thm}{Theorem}[section]
\newtheorem{prop}[thm]{Proposition}
\newtheorem{lem}[thm]{Lemma}
\newtheorem{conj}[thm]{Conjecture}
\theoremstyle{definition}
\theoremstyle{remark}
\let\c@equation\c@thm
\numberwithin{equation}{section}
\title{The Strong Chromatic Index of graphs with maximum degree $\Delta$}
\author{Chuanyun Zang}
\address
{Department of Mathematics and Statistics,\newline
\indent Georgia State University, Atlanta, GA 30303}
\thanks{Corresponding author: Chuanyun Zang, Georgia State University, Atlanta, GA, USA. Email: czang1@gsu.edu.}
\date{\today}
\begin{document}

\begin{abstract}

A strong edge-coloring of a graph $G$ is an edge-coloring such that no two edges of distance at most two receive the same color. The strong chromatic index $\chi'_s(G)$ is the minimum number of colors in a strong edge-coloring of $G$. P. Erd\H{o}s and J. Ne\v{s}et\v{r}il conjectured in 1985 that $\chi'_s(G)$ is bounded above by $\frac54\Delta^2$ when $\Delta$ is even and $\frac14(5\Delta^2-2\Delta+1)$ when $\Delta$ is odd, where $\Delta$ is the maximum degree of $G$. In this paper, we give an algorithm that uses at most $2\Delta^2-3\Delta+2$ colors for graphs with girth at least $5$. And in particular, we prove that any graph with maximum degree $\Delta=5$ has a strong edge-coloring with $37$ colors.

\end{abstract}

\maketitle

\section{Introduction}

A proper \emph{edge-coloring} is an assignment of colors to edges of a graph such that no two edges with common endpoint receive the same color. A \emph{strong edge-coloring} is an assignment of colors to edges of a graph such that no two edges of distance at most two receive the same color. Two edges are of distance at most two if and only if either they share an endpoint or one of their end points are adjacent. An induced matching is a set of edges such that no two edges are of distance at most two. Clearly, a strong edge coloring is an edge coloring in which every color class is an induced matching.
\emph{The strong edge chromatic number} of $G$, usually denoted by $\chi'_s(G)$, is the minimum number of colors in a strong edge-coloring of $G$. For example, the strong chromatic number of Petersen graph is $5$.
A partial strong edge-coloring is a strong edge-coloring except that some edges may be left uncolored.

Let $G$ be a graph with maximum degree $\Delta$. P. Erd\H{o}s and J. Ne\v{s}et\v{r}il \cite{FGST} proposed the following conjecture in $1985$.

\begin{conj}
\begin{equation*}
\chi'_s(G) \leq
\begin{cases}
 \frac54\Delta^2 &\text{if $\Delta$ is even }\\
 \frac14(5\Delta^2-2\Delta+1) &\text{ if $\Delta$ is odd.}\\

\end{cases}
\end{equation*}
\end{conj}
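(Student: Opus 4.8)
The plan is to treat the conjecture as a \emph{stability/extremal} problem rather than a pure upper-bound estimate, since the bound is tight only for a very rigid family. First I would pin down the extremal configurations. The conjectured value is exactly the number of edges of a balanced blow-up of the $5$-cycle: replace each vertex of $C_5$ by an independent set, with the five parts having sizes as equal as possible subject to every vertex-degree being at most $\Delta$. Since $C_5$ has diameter $2$, \emph{every} pair of edges in such a blow-up lies at distance at most $2$, so its whole edge set is one clique in the square of the line graph and $\chi'_s$ equals its number of edges. A short computation (parts of size $\lceil\Delta/2\rceil,\lfloor\Delta/2\rfloor$) recovers $\tfrac54\Delta^2$ for even $\Delta$ and $\tfrac14(5\Delta^2-2\Delta+1)$ for odd $\Delta$. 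Recognizing that the only graphs forcing this many colors are essentially $C_5$-blow-ups is what tells me the proof cannot be a single averaging bound: the worst case is \emph{locally dense in one specific way}, and I must show every other local structure is cheaper.

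Concretely I would attack $\chi'_s(G)=\chi\bigl(L(G)^2\bigr)$ by a two-regime argument. In the \textbf{sparse regime} I would use a semi-random (nibble) coloring together with the Lov\'asz Local Lemma. The naive count---an edge $uv$ clashes with at most $2\Delta(\Delta-1)$ other edges---only yields $\approx 2\Delta^2$, which is the barrier the present paper also meets. To break past it one exploits that these $\sim 2\Delta^2$ forbidden edges overlap heavily: when the neighborhoods around $u$ and $v$ contain few edges (local sparsity), many colors are repeatable, so an iterated ``color-a-fraction, update-the-palettes'' procedure wins back a constant factor. The target is to show that whenever the local edge density around each edge is bounded away from the $C_5$-blow-up value, the nibble already delivers $(\tfrac54+o(1))\Delta^2$ colors.

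In the \textbf{dense regime} I would prove a structural dichotomy: if some edge sees close to the maximal $\approx 2\Delta^2$ edges at distance $\le 2$, then its second neighborhood must be, up to a bounded number of edits, a blow-up of $C_5$. This is the combinatorial heart---a local Tur\'an/stability statement for $L(G)^2$, asserting that $C_5$-blow-ups are the \emph{unique} near-extremal neighborhoods. For graphs that are genuinely $C_5$-blow-ups the bound holds with equality by the edge count above; for graphs within a bounded number of edits I would color the blow-up optimally and absorb the few extra edges using the slack created by parts strictly smaller than $\Delta/2$. Gluing the two regimes requires a threshold on local density at which the nibble estimate and the stability estimate meet.

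The step I expect to be the genuine obstacle is \emph{closing the constant}. Current probabilistic technology (Molloy--Reed and its refinements) stalls near $1.77\Delta^2$, well above $1.25\Delta^2$, precisely because the local-sparsity savings it extracts are too weak to reach the $C_5$ threshold, while the stability side is understood only for $\Delta\le 4$, where finite case analysis replaces a general argument. Making the nibble savings quantitatively match a clean stability theorem for near-extremal neighborhoods---so that \emph{no} intermediate density survives both estimates---is exactly where the problem remains open, and I would first test the entire scheme on the isolated $\Delta=5$ case before attempting general $\Delta$.
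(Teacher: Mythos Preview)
The statement you are attempting to prove is not proved in the paper at all: it is stated there as Conjecture~1.1 (the Erd\H{o}s--Ne\v{s}et\v{r}il conjecture), and the paper's contributions are the much weaker bounds $2\Delta^2-3\Delta+2$ for girth $\ge 5$ and $37$ for $\Delta=5$. So there is no ``paper's own proof'' to compare your proposal against; the paper explicitly treats the conjecture as open beyond $\Delta\le 3$.

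Your write-up is accordingly not a proof but a research programme, and you say as much in your final paragraph: the semi-random/nibble side is known to stall around $1.77\Delta^2$, and no stability theorem of the strength you describe (that near-extremal second neighborhoods in $L(G)^2$ must be $C_5$-blow-ups up to $O(1)$ edits) is available. The genuine gap is therefore twofold. First, the sparse-regime estimate you invoke does not reach $\tfrac54\Delta^2$ for any known choice of the local-sparsity threshold, so ``the nibble already delivers $(\tfrac54+o(1))\Delta^2$'' is an assertion, not an established lemma. Second, the dense-regime stability statement is itself an open problem: high local density in $L(G)^2$ does not, with current tools, force a $C_5$-blow-up structure, and the absorption step (``absorb the few extra edges using the slack'') presupposes that the edits are bounded independently of $\Delta$, which you have not argued. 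Your recognition of the extremal configurations is correct and matches the paper's discussion after the conjecture, but nothing in the proposal constitutes a proof of the bound.
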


The conjectured bounds are best possible with the constructions obtained from a blowup of $C_5$. When $\Delta$ is even, expanding each vertex of a $5$-cycle into an independent set of cardinality $\Delta/2$ yields such a graph with $5\Delta^2/4$ edges. Similarly when $\Delta$ is odd, expanding each of two adjacent vertices into an independent set of cardinality $(\Delta+1)/2$ and each of the other three vertices of $C_5$ into independent set of cardinality $(\Delta-1)/2$ yields a graph with strong chromatic number $(5\Delta^2-2\Delta+1)/4$. Chung, Gy\v{a}rf\v{a}s, Trotter, and Tuza \cite{CGTT} proved that this operation gives the maximum number of edges in a $2K_2$-free graph with maximum degree $\Delta$.

The conjecture has been verified for graphs with maximum degree $\Delta \leq 3$. By using greedy edge coloring strategy, we can easily get $\chi'_s(G) \leq 2\Delta^2-2\Delta+1$. That implies the conjecture is true for $\Delta \leq 2$. For $\Delta=3$, it is proved by Andersen\cite{A} and independently, by Hor\v{a}k, He and Trotter \cite{HHT}, that $\chi'_s(G) \leq 10$ where $G$ is a graph with maximum degree $\Delta= 3$.

For $\Delta=4$, by the conjecture every graph must have a strong edge-coloring using $20$ colors. D. Cranston \cite{D} proved that any graph with maximum degree $4$ has a strong edge-coloring using at most $22$ colors. That is the best upper bound known for $\Delta=4$. For $\Delta=5$, by the conjecture every graph must have a strong edge-coloring using $29$ colors. The strong edge-coloring problem for $\Delta=4$ or $5$ is still open.

In this paper, inspired by papers of L. D. Anderson \cite{A} and D. Cranston \cite{C}, we want to get an upper bound for strong edge chromatic number of graphs with maximum degree $\Delta$. The \emph{girth} of a graph is the length of the shortest cycle. We prove that a $\Delta$-regular 
graph $G$ with girth at leat $5$  has a strong edge-coloring that uses $2\Delta^2-3\Delta+2$ colors. By applying this algorithm to graphs with maximum degree $5$, we obtain a strong edge-coloring using $37$ colors.

Our main results are as follows.
\begin{thm}\label{main1}
If $G$ is a graph with maximum degree $\Delta$ and girth at least $5$, then $G$ has a strong edge-coloring that uses $2\Delta^2-3\Delta+2$ colors.
\end{thm}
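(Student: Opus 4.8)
The plan is to prove the statement by induction on $|V(G)|$, with a fixed palette of $N:=2\Delta^{2}-3\Delta+2$ colours, the small cases being immediate. Throughout I would use the following counting consequence of the girth hypothesis: for an edge $e=uv$, the $d(u)-1$ neighbours of $u$ other than $v$ and the $d(v)-1$ neighbours of $v$ other than $u$ form two families of pairwise nonadjacent vertices, with no repetitions within or between them and no vertex serving as a common neighbour of two of them; hence the edges at distance at most $2$ from $e$ are pairwise distinct and number \emph{exactly} $(d(u)-1)\Delta+(d(v)-1)\Delta\le 2\Delta^{2}-2\Delta$. In particular the trivial greedy bound $2\Delta^{2}-2\Delta+1$ loses nothing to overcounting under this hypothesis, so the whole improvement to $N$ must be squeezed out of conflicting edges that are still uncoloured when $e$ is coloured. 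The inductive step deletes a vertex $v$ of minimum degree, colours $G-v$ (still of maximum degree $\le\Delta$ and girth $\ge 5$) with $N$ colours, and re-inserts $v$ together with its $\delta:=d(v)$ incident edges $vw_{1},\dots,vw_{\delta}$.

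The extension is a list/SDR problem: assign distinct colours $c_{i}$ to the edges $vw_{i}$ with $c_{i}\notin F_{i}$, where $F_{i}$ is the set of colours already used on coloured edges within distance $2$ of $vw_{i}$. Since the $\delta-1$ sibling edges at $v$ are uncoloured, the counting lemma gives $|F_{i}|\le(\Delta-1)(\delta+\Delta-1)$. If $\delta\le\Delta-1$ — in particular whenever $G$ is not $\Delta$-regular — then $(\Delta-1)(\delta+\Delta-1)\le N-\delta$, so each of the $\delta$ lists $[N]\setminus F_{i}$ has at least $\delta$ colours, Hall's condition holds trivially, and a system of distinct representatives exists, completing the step.

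The real difficulty is the case $\delta=\Delta$, i.e.\ $G$ is $\Delta$-regular, where the bound $|F_{i}|\le(\Delta-1)(2\Delta-1)=N-1$ is attained: the lists can collapse to a single colour and Hall can fail, so the colouring of $G-v$ must be actively repaired. Here I would colour the edges at $v$ one at a time and, when stuck at some $e=vw_{k}$, note that being stuck forces all $N$ colours to occur on the $\le 2\Delta^{2}-2\Delta$ coloured edges conflicting with $e$; hence at least $2N-(2\Delta^{2}-2\Delta)=2\Delta^{2}-4\Delta+4>0$ of those colours occur on \emph{exactly one} conflicting edge. Recolouring such a ``singleton'' edge $f$ to a colour not appearing within distance $2$ of $f$ frees its old colour for $e$; choosing $f$ at distance exactly $2$ from $v$ (the girth-$5$ structure guarantees such a singleton) keeps $f$ from interacting with most edges at $v$, which is what makes the repair local.

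The hard part, which I expect to absorb most of the work, is proving that this repair is always available and always terminates in the regular case. One must show, using girth at least $5$ together with the fact that every colour class of $G-v$ is an induced matching (so a fixed colour is ``seen'' from $v$ only a bounded number of times), that some singleton conflicting edge $f$ genuinely has a free colour in its own distance-$2$ neighbourhood, and that successive recolourings cannot cascade forever — for which one should set up a potential function, e.g.\ counting the ``tight'' edges or the number of edges whose conflict set already uses all $N$ colours. An alternative I would keep in reserve is to strengthen the induction hypothesis so that the colouring it produces carries a prescribed amount of slack around every vertex (a guaranteed number of colours missing from each closed second neighbourhood), making the extension at a degree-$\Delta$ vertex go through directly by the SDR argument; the cost is that one must then show this slack can always be maintained, which merely relocates the same obstruction.
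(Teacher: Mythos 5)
Your reduction to the problem of extending a colouring over the star at a single vertex $v$ is sound, and your non-regular case is correct: with $\delta\le\Delta-1$ each list has at least $N-(\Delta-1)(\delta+\Delta-1)\ge\Delta>\delta$ free colours and the extension is immediate (the paper reaches the same conclusion, with one colour fewer, via a distance-compatible greedy order rather than induction). But the whole content of the theorem sits in the $\Delta$-regular case, and there your argument stops exactly where it would have to begin. You correctly note that a stuck edge at $v$ forces many colours to occur on a unique conflicting edge, but you do not establish (i) that some such singleton $f$ admits a recolouring, i.e.\ has a colour absent from its own distance-two neighbourhood --- not automatic, since $|N(f)|$ can be as large as $2\Delta^2-2\Delta>N$, so a given $f$ may see all $N$ colours; (ii) that recolouring $f$ preserves the availability already secured for the other edges at $v$; (iii) that the cascade terminates. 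You name these obligations yourself and defer them, so the proof is incomplete precisely where the girth hypothesis has to do its work. A further structural problem with your setup: by colouring $G-v$ inductively with all $N$ colours, you guarantee only one free colour per list ($N-(2\Delta^2-3\Delta+1)=1$), and these single free colours may coincide across the $\Delta$ edges at $v$; the paper instead colours everything off the star with only $N-1$ colours, so that one colour is held entirely in reserve.

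That reserved colour is the missing idea. The paper places the extra colour $\alpha$ on an induced matching $\{w_iv_i\}_{i=1}^{\Delta}$ with $v_i\in W_i:=N(w_i)\cap D_2$, one edge in the star at each neighbour $w_i$ of $v$. Girth at least $5$ guarantees such a matching exists: the $W_i$ are pairwise disjoint, each vertex of $W_i$ has at most one neighbour in any $W_j$, so one can select $v_k\in W_k$ avoiding the neighbourhoods of $v_1,\dots,v_{k-1}$ at every step (this selection, Proposition~\ref{prop}, is the one genuinely combinatorial lemma, with a small twist at the last step). Now every edge of $N(vw_i)$ other than the uncoloured edges at $v$ lies within distance two of $w_iv_i$ (edges at $w_j$ via $v$, edges at vertices of $W_i$ via $w_i$), so the colour vacated by $w_iv_i$ occurs nowhere else in $N(vw_i)$ and can be given to $vw_i$; and the $\Delta$ vacated colours are pairwise distinct because the matching edges are mutually within distance two through $v$. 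This replaces your open-ended recolouring cascade with a single, verifiable recolouring step, and is the argument you would need to supply to close the regular case.
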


\begin{thm}\label{main2}
If $G$ is a graph with maximum degree $\Delta=5$, then $G$ has a strong edge-coloring that uses $37$ colors.
\end{thm}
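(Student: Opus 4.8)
The plan is to prove Theorem~\ref{main2} by induction on $|E(G)|$, using Theorem~\ref{main1} as a black box for the girth-at-least-$5$ case. I would let $G$ have maximum degree at most $5$ and assume the statement for every graph of maximum degree at most $5$ with fewer edges; if $\operatorname{girth}(G)\ge 5$ then Theorem~\ref{main1} applies and produces a strong edge-coloring with $2\Delta^2-3\Delta+2=37$ colors, so from now on $G$ contains a triangle or a $4$-cycle. The one tool I would use repeatedly is the remark that the edges of $G$ at distance at most two from an edge $uv$ are precisely those incident with $u$, with $v$, or with some neighbour of $u$ or of $v$ (together with $uv$ itself), and that if at most $36$ of these (other than $uv$) are present then $uv$ can be added to any strong $37$-coloring of $G-uv$, which by the inductive hypothesis exists.

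Two easy reductions come first. (i) If $G$ has a vertex $u$ of degree at most $4$, pick an incident edge $uv$: counting at most $3$ edges at $u$, at most $4$ at $v$, at most $4$ at each of the at most $3$ further neighbours of $u$, and at most $4$ at each of the at most $4$ further neighbours of $v$ gives at most $3+4+12+16=35<37$ edges within distance two of $uv$, so $uv$ extends and $G$ was not a counterexample. Hence $G$ is $5$-regular. (ii) If $G$ contains a triangle $uvw$, then along $uv$ the common neighbour $w$ collapses the count to at most $4+4+3+12+12=35<37$ (the three edges at $w$ other than $wu,wv$, plus the edges at $u$, at $v$, and at the two remaining further neighbours on each side), so again $uv$ extends. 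Hence $G$ is $5$-regular and triangle-free; since it still contains a $4$-cycle, its girth is exactly $4$.

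The remaining case — $G$ $5$-regular, triangle-free, of girth $4$ — is the heart of the matter, and is where I expect essentially all of the difficulty (and the genuine use of $\Delta=5$) to lie. The basic observation is that in a triangle-free graph the sets $N(u)\setminus v$ and $N(v)\setminus u$ are disjoint and each independent, so for $5$-regular $G$ the number of edges within distance two of $uv$ is at most $4+4+16+16-m=40-m$, where $m=e(N(u)\setminus v,\,N(v)\setminus u)$ counts exactly the $4$-cycles through $uv$. Thus if some edge lies in at least four $4$-cycles we are done by deleting it and extending, and one is reduced to the situation where every edge lies in at most three $4$-cycles. Here I would run a Cranston-style recoloring argument: in a maximal partial strong $37$-coloring, an uncolored edge $uv$ has all $37$ colors present on its at most $40$ nearby edges, so all but at most three of those edges carry a color \emph{private} to them in the neighbourhood of $uv$; the goal is to recolor one such private edge $f$ — chosen, when possible, to lie on a $4$-cycle or otherwise to have some free colors — and hand its old color to $uv$. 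Carrying this out cleanly will require a case analysis driven by the local picture at the $4$-cycles through $uv$: how the various common neighbourhoods overlap, whether two $4$-cycles share an edge, and the adjacencies among the at most ten vertices within distance one of $uv$; and one must control the recoloring so that freeing a color for $uv$ does not create a new uncolored or miscolored edge elsewhere.

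The main obstacle is exactly this last sub-case. A triangle or a low-degree vertex is reducible by a single edge deletion because it saves five colors in the local count, whereas a single $4$-cycle saves only one, and one must save four to pass from the greedy bound $41$ down to $37$; so $4$-cycles are not locally reducible in the naive way, and either the recoloring bookkeeping above must be pushed through in full, or — the alternative I would fall back on if the recoloring cascades — one sets up a discharging argument to show that a $5$-regular triangle-free graph containing a $4$-cycle must in fact contain one of a short list of richer reducible configurations. Either way, this is the part of the proof that I expect to be long and that uses $\Delta=5$ rather than general $\Delta$; the rest, including the appeal to Theorem~\ref{main1}, is short.
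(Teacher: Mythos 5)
Your proposal has two genuine gaps. First, the delete-and-extend framework is unsound for strong edge-coloring: a strong $37$-coloring of $G-uv$ need not restrict to a partial strong edge-coloring of $G$, because putting $uv$ back makes $u$ and $v$ adjacent and thereby creates new distance-two conflicts between edges at $u$ and edges at $v$ that were at distance three in $G-uv$ and so were allowed to share a color. Your counts ($35<37$ for a low-degree endpoint or a triangle edge, $40-m$ for an edge on $m$ four-cycles) only show that $uv$ itself can receive a color; they do not repair these pre-existing conflicts, and the obvious patch of uncoloring all edges at $u$ and $v$ and recoloring them does not close on the stated numbers (the last edge at $v$ to be recolored can still see up to $38$ colors). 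This is exactly why the paper never deletes an edge: Lemma~\ref{lem1} produces a partial strong edge-coloring of $G$ itself, by coloring greedily in an order compatible with a chosen vertex $v$ so that only the edges incident to $v$ remain uncolored, and the BFS structure guarantees enough uncolored neighbors at every step.

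Second, the case you correctly identify as the heart of the matter --- $G$ $5$-regular, triangle-free, of girth exactly $4$, with every edge on at most three $4$-cycles --- is not actually proved: you sketch two alternative strategies (a Cranston-style private-color recoloring, or a discharging argument for richer reducible configurations) and explicitly defer the case analysis. The paper resolves this case by a concrete construction: after Lemma~\ref{lem1} leaves the five edges at $v$ uncolored, it finds an induced matching of size three in $E(D_1,D_2)$, assigns it a new $37$th color, and shows that the three released colors together with the reduced neighborhood sizes $|F(e_i)|\le 35$ or $36$ suffice to finish greedily (and handles the girth-$3$ case separately by uncoloring all twelve edges meeting the triangle). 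Until you either carry out your recoloring bookkeeping in full or supply the discharging argument, the theorem is not established by your proposal.
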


\vspace{1pc}
\section{An observation for graphs with maximum degree $\Delta$}

Let $G$ be a graph with maximum degree $\Delta$. We refer to the color classes as the integers from $1$ to $2\Delta^2-3\Delta+2$. A greedy coloring strategy is to use the least color class that is not forbidden from use on an edge at the time the edge is colored, i.e., when coloring an edge $e=xy$, we need to forbid the colors that are already used by the edges incident to $x$ or $y$, as well as the colors by the edges having an end-vertex adjacent to $x$ or $y$. Define the neighborhood of $e$ as the set of edges that are incident to $e$, or has an end-vertex adjacent to some end-vertex of $e$, denoted by $N(e)$. Then $|N(e)| \leq 2\Delta(\Delta-1)$. Let $F(e)$ be the set of colors occurring on edges of $N(e)$; edges of $N(e)$ that are still uncolored do not contribute to $F(e)$, therefore $|F(e)| \leq |N(e)| \leq 2\Delta(\Delta-1)$.

Our aim is to find an order of the edges in which we can color the edges of $G$ one by one. Let $v$ be an arbitrary vertex of $G$. For $i=0, 1, 2,..$, let $D_i$ be the set of vertices of distance $i$ from $v$ and we call $D_i$ \emph{distance class i}. So $D_0=\{v\}$. For any edge $e$ of $G$, its distance denoted as $d_v(e)$ is the smallest distance among end-vertices of $e$. We say an edge order is \emph{compatible} with vertex $v$ if $e_1$ precedes $e_2$ in the order only when $d_v(e_1) \geq d_v(e_2)$. Intuitively, we color all the edges in distance class $i+1$ before we color any edge in distance class $i$.

\vspace{1pc}

\begin{lem}\label{lem1}

If $G$ is a graph with maximum degree $\Delta$, then $G$ has a strong edge-coloring that uses $2\Delta^2-3\Delta+1$ colors except that it leaves those edges incident to a single vertex.
\end{lem}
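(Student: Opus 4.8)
The plan is to fix a vertex $v$ and color the edges of $G$ in an order compatible with $v$, peeling off distance classes from the outside in, and to show that when we reach an edge $e$ that is not incident to $v$, fewer than $2\Delta^2-3\Delta+1$ colors are forbidden, so a greedy choice always succeeds; the only edges we may be forced to leave uncolored are those incident to $v$ itself. The point of the compatible order is that, at the moment we color $e=xy$ with $d_v(e)=i$, many edges in $N(e)$ are still uncolored — namely those that live strictly deeper than $e$ are already colored, but we want to argue that enough edges of $N(e)$ are \emph{not yet} colored (because they lie at distance $\le i$ and we process distance class $i$ after class $i+1$, but within class $i$ and toward $v$ nothing is colored yet).

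The key computation is to bound $|F(e)|$ better than the trivial $2\Delta(\Delta-1) = 2\Delta^2 - 2\Delta$. First I would take $e = xy$ with $d_v(e) = i \ge 1$, say $d_v(x) = i$ (so $d_v(y) \in \{i, i+1\}$, but since we want to handle the generic edge, WLOG $x$ is the endpoint closer to $v$). Since $i \ge 1$, $x$ has at least one neighbor $x'$ with $d_v(x') = i-1$, i.e.\ $x$ has an edge $xx'$ going strictly toward $v$. At the time $e$ is colored, that edge $xx'$ is uncolored (it has distance $i-1 < i$), and moreover every edge incident to $x'$ other than $xx'$ — there are up to $\Delta - 1$ of them — is also uncolored, because each such edge has distance $\le i-1$. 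These $\Delta - 1$ edges (the ones at $x'$ distinct from $xx'$) all lie in $N(e)$: they have an endpoint $x'$ adjacent to the endpoint $x$ of $e$. Hence they do not contribute to $F(e)$. One has to be slightly careful that $xx'$ itself was already excluded from the count or counted among the "incident to $x$" edges, and that we are not double-discounting; the honest bookkeeping is $|N(e)| \le 2\Delta(\Delta-1)$ and at least $\Delta - 1$ of these edges are guaranteed uncolored, giving $|F(e)| \le 2\Delta(\Delta-1) - (\Delta - 1) = 2\Delta^2 - 3\Delta + 1$. Since we are allowed $2\Delta^2 - 3\Delta + 1$ colors and strictly fewer are forbidden... — here I would actually want to shave one more, or note that the statement says "uses $2\Delta^2 - 3\Delta + 1$ colors" meaning that many are available, so $|F(e)| \le 2\Delta^2 - 3\Delta + 1$ is not quite enough; I expect the real argument finds at least $\Delta$ uncolored edges in $N(e)$ (for instance by also using that the edge $xx'$ is in $N(e)$ and uncolored, on top of the $\Delta-1$ edges at $x'$, as long as $xx' \ne e$, which holds since $d_v(x') \ne d_v(y)$ when... — this needs girth or a parity check), yielding $|F(e)| \le 2\Delta^2 - 3\Delta$ and a free color among $2\Delta^2 - 3\Delta + 1$.

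So the steps, in order, are: (1) pick $v$, form distance classes $D_0, D_1, \dots$, and fix any edge order compatible with $v$; (2) process edges in that order, greedily; (3) for an edge $e = xy$ with $d_v(e) \ge 1$, identify the endpoint $x$ closer to $v$, pick a neighbor $x'$ of $x$ with $d_v(x') = d_v(x) - 1$, and list the edges at $x'$ (including or excluding $xx'$ as the count requires) as members of $N(e)$ that are necessarily uncolored at the time $e$ is treated; (4) conclude $|F(e)| < 2\Delta^2 - 3\Delta + 1$, so a color is available; (5) observe that the only edges never handled with a guaranteed spare color are those with $d_v(e) = 0$, i.e.\ incident to $v$, which are exactly the exceptional edges in the statement. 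The main obstacle is step (3)–(4): getting the constant exactly right, i.e.\ certifying one \emph{more} uncolored edge in $N(e)$ than the obvious $\Delta - 1$, which is presumably where some case analysis on whether $d_v(y) = i$ or $i+1$ (and possibly the girth-$5$ hypothesis, to rule out short cycles that would make $N(e)$ smaller than $2\Delta(\Delta-1)$ in a way that's hard to track, or to ensure the discounted edges are genuinely distinct) enters. I would also double-check the edge case $i = 1$ separately, since there $x' = v$ and the edges at $v$ are precisely the ones we are allowed to leave uncolored.
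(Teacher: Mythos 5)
Your proposal follows the paper's proof exactly: greedily color in a $v$-compatible order, and for an edge $e=xy$ with $d_v(e)=i\ge 1$ discount the $\Delta$ edges incident to a neighbor $x'$ of $x$ lying in $D_{i-1}$ (namely $xx'$ itself plus the $\Delta-1$ others), all of which lie in $N(e)$ and are still uncolored in a compatible order, giving $|F(e)|\le 2\Delta(\Delta-1)-\Delta=2\Delta^2-3\Delta$. The one point you left hanging --- whether $xx'=e$ --- needs no girth or parity argument: $d_v(e)$ is the \emph{minimum} of the distances of the endpoints of $e$, so both $x$ and $y$ are at distance at least $i$ from $v$ while $d_v(x')=i-1$, hence $x'\ne y$ and $xx'\ne e$.
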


\begin{proof}
Let $v$ be a vertex of $G$. Greedily color the edges in an order that is compatible with $v$. If $e$ is an edge not incident to $v$, then $d_v(e)\geq 1$, and an end-vertex $x$ of $e$ with $x\in D_{d(e)}$ will be adjacent to a vertex $u$ in $D_{d(e)-1}$. When we color $e$, none of the $\Delta$ edges incident to $u$ has yet been colored, so at most $2\Delta^2-3\Delta$ edges of $N(e)$ have been colored, i.e $|F(e)|\leq 2\Delta^2-3\Delta$. Hence, we get a strong edge-coloring that uses $2\Delta^2-3\Delta+1$ colors except that it leaves those edges incident to $v$.

\end{proof}

\vspace{1pc}
\section{Graphs with maximum degree $\Delta$ and girth at least $5$}

\subsection{Graphs with $\delta < \Delta$}

\begin{lem}\label{lemnr}
Any graph with maximum degree $\Delta$ that has a vertex with degree at most $\Delta-1$ has a strong edge-coloring that uses $2\Delta^2-3\Delta+1$ colors.
\end{lem}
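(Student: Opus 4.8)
The plan is to recycle the greedy procedure behind Lemma~\ref{lem1}, but now centered at a vertex $w$ with $\deg(w)\le\Delta-1$ rather than at an arbitrary vertex. Running that procedure with $v=w$, Lemma~\ref{lem1} produces a partial strong edge-coloring of $G$ with the colors $\{1,\dots,2\Delta^2-3\Delta+1\}$ in which the only uncolored edges are the (at most $\Delta-1$) edges incident to $w$. So the whole problem reduces to extending this coloring greedily over those few edges. I would color them one at a time in any order; since such an edge $e=wx$ is still uncolored when its turn comes, every color forbidden for $e$ appears on some (already colored) edge of $N(e)$, and hence it suffices to verify that
\[
|N(e)|\ \le\ 2\Delta^2-3\Delta
\]
for every edge $e$ incident to $w$ --- then color $2\Delta^2-3\Delta+1$ (or a smaller one) is always available, and coloring $e$ never violates a constraint with an already-colored edge.

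To prove this bound I would count $N(e)$ in two layers. The edges that share an endpoint with $e=wx$ number at most $(\deg(w)-1)+(\deg(x)-1)=\deg(w)+\deg(x)-2$. Every other edge of $N(e)$ has an endpoint $a\in\bigl(N(w)\cup N(x)\bigr)\setminus\{w,x\}$, and there are at most $\deg(w)+\deg(x)-2$ such vertices $a$; each of them already owns an edge ($aw$ or $ax$) pointing back to $\{w,x\}$, so it contributes at most $\deg(a)-1\le\Delta-1$ further edges to $N(e)$. Adding the two layers,
\[
|N(e)|\ \le\ \bigl(\deg(w)+\deg(x)-2\bigr)\bigl(1+(\Delta-1)\bigr)\ =\ \bigl(\deg(w)+\deg(x)-2\bigr)\Delta\ \le\ (2\Delta-3)\Delta\ =\ 2\Delta^2-3\Delta,
\]
where the last inequality uses $\deg(w)\le\Delta-1$ and $\deg(x)\le\Delta$. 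That is exactly the inequality needed, so the greedy extension goes through and $G$ receives a strong edge-coloring with $2\Delta^2-3\Delta+1$ colors.

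The substantive point --- and the only place where anything can go wrong --- is the tightness of this count. The all-purpose estimate $|N(e)|\le 2\Delta(\Delta-1)=2\Delta^2-2\Delta$ overshoots the color budget by $\Delta$, and lowering one endpoint's degree from $\Delta$ to $\Delta-1$ must recover precisely that $\Delta$: one unit from the layer of edges incident to $e$, and $\Delta-1$ from having one fewer vertex in the second neighborhood. It is also essential not to bound the second-neighborhood contributions by the cruder $\deg(a)\le\Delta$, which would give only $2\Delta^2-\Delta-3>2\Delta^2-3\Delta$; the saving comes exactly from the observation that each such $a$ has already spent one of its edges on $aw$ or $ax$. (I tacitly assume $G$ is connected, so that the distance classes around $w$ exhaust $G$.)
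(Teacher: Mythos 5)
Your proposal is correct and follows essentially the same route as the paper: apply Lemma~\ref{lem1} centered at the low-degree vertex and then greedily extend over its incident edges using the bound $|N(e)|\le 2\Delta^2-3\Delta$ (which the paper asserts and you verify by the two-layer count); the paper additionally tracks the extra slack from the still-uncolored edges at $v$, but as your argument shows this refinement is not needed. The only caveat, which you already flag and which the paper shares, is the implicit assumption that $G$ is connected.
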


\begin{proof}
Let $v$ be the vertex with degree at most $\Delta-1$. Greedily color the edges in an order that is compatible with $v$, by Lemma \ref{lem1} we get a partial strong edge-coloring using $2\Delta^2-3\Delta+1$ except leaving those edges incident to $v$. Let $e_i$ be the edge incident to $v$, $|N(e_i)|\leq 2\Delta^2-3\Delta$, where $i=1,2,\dots,\Delta-1$. We can color those edges incident to $v$ in the order $e_1, e_2, e_3,...,e_{\Delta-1}$, and $|F(e_1)|\leq 2\Delta^2-3\Delta-\Delta+2$, $|F(e_2)|\leq 2\Delta^2-3\Delta-\Delta+3$, \dots, $|F(e_{\Delta-1})|\leq 2\Delta^2-3\Delta$, so there are colors available for each edge incident to $v$.

\end{proof}

\subsection{$\Delta$-regular graphs with girth at least $5$}

\begin{lem}\label{lemg5}

Any $\Delta$-regular graph with girth at least $5$ has a strong edge-coloring that uses $2\Delta^2-3\Delta+2$ colors.
\end{lem}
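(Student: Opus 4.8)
Here is a proof plan.

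\medskip

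The plan is to reduce to Lemma~\ref{lem1} and then finish the edges at one vertex by hand, using the rigid local picture that girth at least $5$ forces. First I would fix a vertex $v$ and run the greedy algorithm of Lemma~\ref{lem1} on an order compatible with $v$; this gives a partial strong edge-coloring with colors from $\{1,\dots,2\Delta^2-3\Delta+2\}$ in which only the $\Delta$ edges $e_1=vu_1,\dots,e_\Delta=vu_\Delta$ at $v$ are uncolored, and in which the color $2\Delta^2-3\Delta+2$ is still completely unused. Because $G$ is $\Delta$-regular of girth at least $5$, the ball of radius $2$ about $v$ is a tree: $u_1,\dots,u_\Delta$ are pairwise non-adjacent, each $u_j$ has $\Delta-1$ other neighbors $w_{j,1},\dots,w_{j,\Delta-1}$, and all these vertices are distinct. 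Call the edges $u_jw_{j,k}$ the \emph{spokes}. Two facts will be used repeatedly: each $e_i$ lies within distance $2$ of every other $e_{i'}$ and of every spoke (because $v\sim u_j$); and every spoke still has at least two usable colors, since all $\Delta$ edges at $v$ lie in its neighborhood and are uncolored, so $|F(u_jw_{j,k})|\le|N(u_jw_{j,k})|-\Delta\le2\Delta^2-3\Delta$.

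Next I would rephrase the remaining task. Coloring $e_1,\dots,e_\Delta$ amounts to choosing pairwise distinct colors $c_i\notin A\cup R_i$, where $A$ is the set of colors currently on the spokes and $R_i$ is the set of colors on the at most $(\Delta-1)^2$ already-colored edges meeting $\{w_{i,1},\dots,w_{i,\Delta-1}\}$. By Hall's theorem such a system of distinct representatives exists once $|A\cup\bigcap_{i\in T}R_i|\le 2\Delta^2-3\Delta+2-|T|$ for every nonempty $T\subseteq\{1,\dots,\Delta\}$. Since $|A|\le\Delta(\Delta-1)$ and $|R_i|\le(\Delta-1)^2$ give $|A\cup R_i|\le 2\Delta^2-3\Delta+1$, the condition is automatic when $|T|=1$; moreover each spoke at $u_j$ must avoid every color of $R_j$ (those $w_{j,\cdot}$-edges lie within distance $2$ of the spoke), so $A\cap\bigcap_iR_i=\emptyset$, and the only way Hall's condition can fail is that $|A|$ is near its maximum $\Delta(\Delta-1)=(\Delta-1)^2+(\Delta-1)$ while the $R_i$ overlap heavily. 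A short computation shows that it suffices to have $|A|\le(\Delta-1)^2$: then $|A\cup\bigcap_{i\in T}R_i|\le 2(\Delta-1)^2\le 2\Delta^2-3\Delta+2-|T|$ for all $|T|\le\Delta$.

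So the heart of the argument is to repaint the spokes — permissible by the second fact above — so that they use at most $(\Delta-1)^2$ colors in total, i.e.\ so that one spoke-star's colors get reused to save the needed $\Delta-1$ colors. Here girth $\ge5$ helps once more: spokes at different $u_j$'s conflict only when a $5$-cycle runs through $v$, so, absent such cycles, the spoke-conflict graph is a disjoint union of $\Delta$ cliques $K_{\Delta-1}$, and one can recolor the stars one at a time, always steering a star's $\Delta-1$ colors into the palette already in use; each spoke keeps at least two legal colors throughout, so the process does not get stuck, and the union of spoke colors can be held to $(\Delta-1)^2$. After that the Hall argument of the previous paragraph colors $e_1,\dots,e_\Delta$ and completes the proof. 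The hard part — the step I expect to demand real care — is carrying this out when $5$-cycles do pass through $v$: then the spoke-conflict graph picks up extra edges (at most $\Delta-1$ at any one spoke), a star's colors can no longer be chosen as freely, and one must check that each such extra conflict is offset by a matching unit drop in the forbidden set of that spoke (its partner spoke $u_{j'}w_{j',k'}$ lies in $N(u_jw_{j,k})$ but, being uncolored, contributes no forbidden color there), so the spoke lists stay large enough to be driven into a common $(\Delta-1)^2$-color palette; if even this is insufficient in some degenerate configuration, one can additionally repaint the distance-$2$ edges at the vertices $w_{j,k}$, which also become recolorable once all spokes are uncolored.
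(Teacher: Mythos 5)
Your reduction is sound and in fact mirrors what the paper does: after running Lemma~\ref{lem1}, everything comes down to making the ``spokes'' (the edges of $E[D_1,D_2]$) use at most $(\Delta-1)^2$ colors in total, since then $|F(e_i)|\le 2(\Delta-1)^2+(\Delta-1)<2\Delta^2-3\Delta+2$ and the edges at $v$ can be finished greedily (your Hall formulation is correct but not even needed). The gap is in how you propose to achieve that bound on the spoke palette. You want to recolor spoke stars one at a time, ``steering'' their colors into the palette already in use, on the grounds that each spoke retains at least two legal colors. But two legal colors gives you no control over \emph{which} colors they are: both may lie outside the target palette, in which case recoloring that spoke adds a new color rather than saving one. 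Nothing in your argument forces the $\Delta(\Delta-1)$ spokes, whose individual lists are governed by the roughly $2\Delta^2-3\Delta$ colored edges in each spoke's own second neighborhood (reaching out to $D_3$ and $D_4$), to admit a joint coloring with only $(\Delta-1)^2$ colors. And the case you yourself flag as hard --- cross-star conflicts coming from edges inside $D_2$, i.e.\ $5$-cycles through $v$ --- is exactly where the write-up dissolves into ``one must check'' and ``if even this is insufficient one can additionally repaint''; that is the entire content of the lemma, not a detail.

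The paper's device avoids all of this: it does not try to push spokes into \emph{existing} colors at all. Instead (Proposition~\ref{prop}) it finds an induced matching of size $\Delta$ in $E[D_1,D_2]$ containing exactly one spoke from each star $W_i$, and paints all $\Delta$ of those edges with the single \emph{brand-new} color $\alpha$. Legality is immediate (the matching is induced and $\alpha$ appears nowhere else), and the spoke palette drops from at most $\Delta(\Delta-1)$ to at most $\Delta(\Delta-2)+1=(\Delta-1)^2$ in one stroke. The real work is the combinatorial existence of that induced matching --- a system of distinct representatives $v_i\in W_i$ with $\{v_1,\dots,v_\Delta\}$ independent in $D_2$ --- which the paper proves by a sequential selection using $|W_i|=\Delta-1$, $W_i\cap W_j=\emptyset$, and $|N(u)\cap W_j|\le 1$ (all consequences of girth $\ge 5$). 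Your proposal contains no equivalent of this argument, so as written it does not close.
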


Before proving Lemma \ref{lemg5}, we first do some observations. Let $v$ be a vertex of $G$. We want to greedily color the edges in an order that is compatible with $v$. By Lemma \ref{lem1}, we get a partial strong edge-coloring that uses $2\Delta^2-3\Delta+1$ colors except that it leaves those edges incident to $v$. To finish the proof, we need to consider the local structure of those uncolored edges incident to $v$. By adding one more color class, we release $\Delta$ colors available to be greedily assigned to those edges incident to $v$.

Let $D_1, D_2$ be the vertex distance classes of $v$ with distance $1$ and $2$, respectively. Since the girth is at leat $5$, there are no induced edges within $D_1$, and any two distinct vertices in $D_1$ don't have common neighbor in $D_2$. Let $E[D_1, D_2]$ be the set of edges which have one end in $D_1$ and the other end in $D_2$.

\begin{prop}\label{prop}
By recoloring, we can assign the same color (say color $\alpha$) to $\Delta$ edges of $E[D_1, D_2]$.
\end{prop}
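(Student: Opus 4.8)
The plan is to realize the color $\alpha$ on one edge from each vertex of $D_1$, and to show this can be done consistently after recoloring. First I would set up a bipartite auxiliary structure: for each vertex $u \in D_1$, let $N_2(u) = N(u) \cap D_2$ be its neighbors in the second distance class. Since the girth is at least $5$, the sets $N_2(u)$ are pairwise disjoint as $u$ ranges over $D_1$, and since $G$ is $\Delta$-regular each $u \in D_1$ has exactly $\Delta - 1$ neighbors in $D_2$ (its remaining neighbor being $v$), so $|N_2(u)| = \Delta - 1 \geq 1$. For each $u$ we want to recolor one edge $u w_u$ with $w_u \in N_2(u)$ to the color $\alpha$; the candidate set of edges is $\{u w : w \in N_2(u)\}$, and the goal is to choose one such edge per $u$ so that the resulting assignment is a legal partial strong coloring, i.e. the $\Delta$ chosen edges pairwise lie at distance more than two and $\alpha$ does not conflict with colors already present near them.

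The key observation is that two candidate edges $u w_u$ and $u' w_{u'}$ with $u \neq u'$ in $D_1$ are automatically at distance more than two: any path of length at most two between them would have to pass through $v$ or create a short cycle through $D_1$ and $D_2$, both excluded by girth at least $5$. So the only obstruction to assigning $\alpha$ to a given candidate edge $u w$ is that $\alpha$ might already appear on some edge in $N(u w)$ — that is, on an edge incident to $u$, incident to $w$, or incident to a neighbor of $u$ or $w$. The recoloring freedom is what saves us: before making the choice, I would first strip the color $\alpha$ (the newly released color class) from every edge where it might currently sit, which is harmless since those edges can be re-assigned later using the released palette, and then argue that for each $u \in D_1$ at least one of the $\Delta - 1$ candidate edges $u w$, $w \in N_2(u)$, avoids $\alpha$ on its neighborhood — here one counts how many of the $w \in N_2(u)$ can be "blocked", using that distinct $w$'s have disjoint neighborhoods again by girth, so a single $\alpha$-colored edge elsewhere blocks at most a bounded number of choices, far fewer than $\Delta - 1$ when $\Delta$ is not too small.

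The main obstacle I expect is the bookkeeping of exactly which edges must be recolored and in what order, so that removing $\alpha$ from the coloring and then re-establishing a legal strong coloring on the stripped edges does not cascade: one must ensure that when $\alpha$ is placed on the chosen edges $u w_u$, the edges from which $\alpha$ was removed can each still receive some color from the palette of size $2\Delta^2 - 3\Delta + 2$, which follows because each such edge has at most $2\Delta(\Delta - 1) = 2\Delta^2 - 2\Delta$ forbidden colors but we will have at least one extra slot from the enlarged palette and the deletion of $\alpha$ itself frees room. I would handle this by processing the edges of $E[D_1, D_2]$ in an order compatible with $v$ (innermost last), so that when an edge is recolored everything farther from $v$ is already fixed, mirroring the strategy of Lemma~\ref{lem1}. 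The delicate case will be small $\Delta$, where $\Delta - 1$ candidate edges per vertex is barely enough; I would expect the argument to require treating the structure around $v$ more carefully, possibly choosing $v$ to be a vertex whose second neighborhood is as large as possible.
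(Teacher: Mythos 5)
There is a genuine gap, and it sits exactly where the real work of this proposition lies. You claim that two candidate edges $uw_u$ and $u'w_{u'}$ with $u\neq u'$ in $D_1$ are ``automatically at distance more than two'' because any short connection would force a cycle of length at most $4$. That is false: the girth hypothesis rules out $u\sim u'$, and it rules out $w_u\in N(u')$ (a common neighbour of $u,u'$ in $D_2$ would close a $4$-cycle through $v$), but it does \emph{not} rule out an edge $w_u w_{u'}$ inside $D_2$. Such an edge closes only the $5$-cycle $v,u,w_u,w_{u'},u',v$, which is perfectly compatible with girth $5$ (think of $C_5$ itself, or the Petersen graph). So the chosen edges need not form an induced matching, and the heart of the proposition is precisely to pick one neighbour $v_i\in W_i:=N(w_i)\cap D_2$ for each $w_i\in D_1$ so that $\{v_1,\dots,v_\Delta\}$ is an \emph{independent set} in $D_2$. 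The paper does this by a sequential selection/case analysis using the structural facts that each $W_i$ is independent, the $W_i$ are pairwise disjoint, and each vertex of $W_j$ has at most one neighbour in $W_i$ for $i\neq j$ (all consequences of girth $\ge 5$); the delicate point is the last set $W_\Delta$, where one needs a vertex of $D_2$ with two neighbours in $D_2$ (or the absence of such) to make the count close. None of this appears in your proposal because you have argued it away.

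A secondary issue: most of your effort goes into managing conflicts with pre-existing occurrences of $\alpha$, but in the intended reading $\alpha$ is the brand-new $(2\Delta^2-3\Delta+2)$-nd colour, used nowhere in the partial colouring produced by Lemma~\ref{lem1}. Once the $\Delta$ chosen edges form an induced matching, assigning them all the fresh colour $\alpha$ is immediately legal and no stripping or cascading recolouring is needed. So the part of your argument that is carefully developed addresses a non-problem, while the actual obstruction (edges of $G$ inside $D_2$ joining the chosen endpoints) is dismissed by an incorrect girth argument.
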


\begin{proof}
Let $D_1={w_1, w_2, ..., w_\Delta}$. For $i=1,2,3,\dots, \Delta$,  $w_i$ has $\Delta-1$ neighbors in $D_2$, and denote the set of these neighbors as $W_i$. Since there is no triangle in $G$, $w_i\cup W_i$ induces a $K_{1,\Delta-1}$. Now we give some observations as follows:

\begin{enumerate}
 \item [a.] $W_i\cap W_j=\emptyset$, for any $i\ne j$.
 \item [b.] no induced edges within $D_1$ .
 \item [c.] no induced edges within each $W_i, i=1,2,3,\dots,\Delta$ .
 \item [d.] $|N(u)\cap W_j| \leq 1$ for any $u \in W_i$ where $i\ne j$ and  $i, j=1,2,3,\dots,\Delta$.

\end{enumerate}

Let $N_2(u):=N(u)\cap D_2$ for any $u \in D_2$.
Our goal is to find an induced matching in $E[D_1, D_2]$ with size $\Delta$ and assign new color $\alpha$ to them. It is sufficient to find an independent set $V_0$ of size $\Delta$ consisting of exactly one vertex from each $W_i$ where $i=1,2,3,\dots, \Delta$.

\emph{Case 1. } $|N_2(u)| \leq 1$ for any $u$ in $D_2$. If $|N_2(u)|=0$ for any $u$ in $D_2$, i.e., there is no edge in $D_2$, then we can choose $\Delta$ edges in $E[D_1, D_2]$ by choosing one vertex from each $W_i, i=1,2,3,\dots, \Delta$. If there exists $u\in D_2$ such that $|N_2(u)|=1$, note that the set of edges in $D_2$ is an induced matching. We choose a vertex with one neighbor in $D_2$, say it is from $W_1$, denoted as $v_1$. Suppose the only neighbor of $v_1$ in $D_2$ is from $W_2$, then we can choose one vertex from $W_2$ which is not adjacent to $v_1$, denoted as $v_2$. This is possible since $W_1\cap W_2=\emptyset$. Consider the only neighbor of $v_2$ in $D_2$, if it is not in $W_1$, we may assume $N_2(v_2)\subset W_3$, then we choose one vertex from $W_3$ which is different from this neighbor, denoted as $v_3$; otherwise we can arbitrarily choose one vertex from $W_3$ with $\Delta$ choices. Continue this process, and each step we have at least $\Delta-1$ choices. So we get a vertex subset $V_0\subset D_2$ of size $\Delta$ such that $E[D_1, V_0]$ is an induced matching.

\emph{Case 2. } There exists a vertex $u$ in $D_2$ such that $|N_2(u)| \geq 2$. Let $v_1, v_2 \in N_2(u)$, and suppose $u \in W_\Delta, v_1 \in W_1, v_2 \in W_2$. It is obvious that$v_1, v_2$ are nonadjacent otherwise there is a triangle. Let $V_1=\{v_1\}$, we will choose vertices sequentially as follows:

If we already have $V_{k-1}=\{v_1, v_2, \dots, v_{k-1} \}$, then choose $v_k \in W_k\setminus N_2(V_{k-1})$ and let $V_k=V_{k-1} \cup \{v_k \}$.
This process is possible since $|W_k| = \Delta -1$ and $|W_k \cap N_2(V_{k-1})| \leq k-1$ because of observation (d), we get $|W_k\cap N_2 (V_{k-1})|\ge (\Delta-1)-(k-1) \geq 1$ when $k \leq \Delta-1$. When $k=\Delta$, since $N_2(v_1, v_2)\cap W_\Delta=\{u\}$, we have $|W_\Delta \setminus N_2(V_{\Delta-1})| \leq (\Delta-1)-1=\Delta-2 < |W_{\Delta}|$. So we choose $v_\Delta\in W_{\Delta}$ and let $V_0=V_{k-1} \cup {v_\Delta}$, and $E[D_1, V_0]$ is an induced matching.
\end{proof}

\begin{proof}[Proof of lemma of Lemma \ref{lemg5}]

First by Lemma \ref{lem1}, we get a partial strong edge-coloring $\pi$ with $2\Delta^2-3\Delta+1$ colors except that it leaves those edges incident to some vertex $v$. Now consider the local structure within 2 distance classes from $v$, by Proposition \ref{prop}, we can assign a new color $\alpha$ to $\Delta$ edges in $E[D_1, D_2]$ and release those colors used by these $\Delta$ edges in $\pi$. By greedily assign these released color to those $\Delta$ edges incident to $v$, we obtain a strong edge-coloring that uses $2\Delta^2-3\Delta+2$ colors.
\end{proof}

\vspace{1pc}

\section{Graphs with maximum degree $\Delta=5$}

Lemma \ref{lem1} with $\Delta=5$ provides a partial strong edge-coloring with $36$ colors. So we only need to consider the local structure within distance $2$ from a single vertex $v$. When the girth of the graph is at least $5$, Theorem \ref{main2} can be obtained from Lemma \ref{lemg5} since $2\Delta^2-3\Delta+2=37$ with $\Delta=5$. When there exists a vertex with degree less than $5$, Theorem \ref{main2} is true by Lemma \ref{lemnr}. Therefore the remaining cases are the 5-regular graphs with the girth at most $4$.

\subsection{Graphs with $\Delta=5$ and girth $3$} We have the following lemma.

\begin{lem}
If $G$ is a $5$-regular graph with girth $3$, then $G$ has a strong edge-coloring that uses $37$ colors.
\end{lem}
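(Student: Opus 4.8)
The plan is to follow the same template used for the girth-at-least-$5$ case: by Lemma~\ref{lem1} with $\Delta=5$ we already have a partial strong edge-coloring $\pi$ with $36$ colors in which only the (at most) five edges incident to a fixed vertex $v$ are uncolored. Adding a $37$th color $\alpha$ gives us one free color, but that alone releases only enough room for one of the five edges at $v$; as in Proposition~\ref{prop} the real task is to recolor enough edges \emph{near} $v$ to color $\alpha$ so that five distinct colors are freed up and can be greedily reassigned to $e_1,\dots,e_5$. So the goal is to find, inside the edges meeting $D_1=N(v)$, an induced matching $M$ of size $5$ (one edge at each $w_i\in D_1$), recolor all of $M$ to $\alpha$, and then greedily color the $e_i$ from the five released colors.

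First I would set up the local structure around $v$. Write $D_1=\{w_1,\dots,w_5\}$ and $D_2$ for the second distance class. Because the girth is $3$ rather than $5$, two new phenomena appear that did not occur in Proposition~\ref{prop}: (i) there may be edges inside $D_1$ (triangles through $v$), and (ii) two vertices $w_i,w_j$ may share a common neighbor in $D_2$ (a $4$-cycle through $v$, or a triangle not through $v$). This means the sets $W_i:=N(w_i)\cap D_2$ need no longer be disjoint, and the clean observations (a)--(d) of Proposition~\ref{prop} fail. The right move is to pick the induced matching $M$ greedily at the $w_i$ one at a time, but allowing an edge of $M$ at $w_i$ to be either an edge $w_iw_j$ inside $D_1$ or an edge $w_iu$ with $u\in D_2$, and to bound at each step the number of choices that are killed by the previously selected edges of $M$. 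Since each $w_i$ has degree $5$ and we have at most four previously chosen matching edges, a counting argument of the same flavor as in Case~2 of Proposition~\ref{prop} — each earlier matching edge forbids at most a bounded number of candidates at $w_i$ — should leave at least one admissible choice at every step, once the bookkeeping is done carefully. A subtlety is that when we choose an edge $w_iw_j$ inside $D_1$ it simultaneously serves as the matching edge for two vertices of $D_1$, which actually helps (fewer edges of $M$ overall), but one must then be careful that $M$ stays an \emph{induced} matching, i.e.\ no two chosen edges are within distance $2$.

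After $M$ is found, I recolor every edge of $M$ to the new color $\alpha$; this is legitimate precisely because $M$ is an induced matching, and it frees up the (at most five) old colors $\pi(f)$, $f\in M$. In fact $|M|=5$ unless some matching edges lie inside $D_1$, in which case $|M|<5$ and we free fewer than five colors — so the cleanest formulation is to arrange that $M$ consists of five edges, one genuinely "assigned" to each $w_i$, even if some are shared; the count that matters is that the multiset of released colors has size $5$ (counting the color of an edge $w_iw_j$ once for $w_i$ and once for $w_j$ only if that is consistent, otherwise we must free a full five distinct colors, which forces us to prefer $D_1$–$D_2$ edges when possible). I would therefore structure the proof to first try to select, for each $w_i$, a distinct edge $w_iu_i$ with $u_i\in D_2$ and the $u_i$ pairwise non-adjacent and with $u_i\ne u_j$ — i.e.\ essentially reduce to the situation of Proposition~\ref{prop} — and only fall back to using $D_1$-internal edges when the $D_2$ option is obstructed, handling that fallback by a short separate argument. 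Finally, with five colors released at $v$ and the five edges $e_1,\dots,e_5$ at $v$ pairwise within distance $\le 2$, greedily color them: when coloring $e_k$, its neighborhood $N(e_k)$ has size $\le 2\Delta^2-3\Delta=35$, but the colors of $M$ were removed and the $37$th color $\alpha$ is available, so a free color always remains.

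The main obstacle I expect is the loss of the disjointness and degree-one observations (a)--(d): with girth $3$, vertices of $D_2$ can have several neighbors in $D_1$ and in $D_2$, so the greedy selection of the induced matching $M$ must be driven by a more delicate case analysis — in particular, carefully choosing \emph{which} of the $\le 5$ edges at $w_i$ to try, and in \emph{what order} to process the $w_i$ (perhaps starting from a vertex lying on a triangle, as in Case~2 of Proposition~\ref{prop}), so that at each of the five steps the number of candidates forbidden by earlier choices stays strictly below the number available. Getting that inequality to hold in the worst local configurations (several triangles and $4$-cycles crowded around $v$) is the crux; once the induced matching of the right size exists, the recoloring-and-greedy finish is routine.
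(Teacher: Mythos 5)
Your plan transplants the recoloring strategy of Proposition~\ref{prop} to the girth-$3$ setting, but the step that carries all the difficulty --- the existence of the induced matching $M$ of size $5$, one edge at each $w_i$, inside the now highly irregular neighborhood of $v$ --- is exactly the step you do not prove. You say a counting argument ``should leave at least one admissible choice at every step, once the bookkeeping is done carefully,'' and later that getting the inequality to hold in the worst configurations ``is the crux''; that crux is the proof, and it is absent. Moreover it genuinely can fail: if $D_1$ induces many edges (several triangles through $v$), or if the sets $W_i$ overlap heavily, there may be no induced matching of size $5$ among the edges meeting $D_1$ (in the extreme, $G=K_6$ locally, where any two such edges are within distance two). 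Your fallback --- use $D_1$-internal edges, or argue that $|N(e_i)|$ is then small enough to color greedily --- is only gestured at, and it hides a second quantitative gap: a matching edge $w_iw_j\subseteq D_1$ serves two vertices of $D_1$ but contributes only one $\alpha$-colored edge to each $N(e_k)$, so the bound $|F(e_5)|\le |N(e_5)|-|M|+1$ no longer reaches $36$ unless you separately verify that each triangle through $v$ shrinks $|N(e_5)|$ by enough to compensate. None of these trade-offs is checked, so the argument as written does not close.

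For comparison, the paper's proof takes a much shorter and entirely different route that avoids the matching question altogether: it anchors the construction at a triangle $v_1v_2v_3$ rather than at a single vertex, applies Lemma~\ref{lem1} at $v_1$, and then \emph{uncolors} all $12$ edges incident to the triangle. The triangle itself forces $|N(e)|\le 39$ for every pendant edge $e$ of the triangle and $|N(c_i)|\le 35$ for the triangle edges, so coloring the pendant edges first (each sees at least the three uncolored triangle edges, giving $|F(e)|\le 36$) and the triangle edges last (with $|F(c_i)|\le 35$) succeeds greedily with $37$ colors. If you want to keep your approach, you would need to carry out the full case analysis on the local structure around $v$ --- essentially a harder version of the paper's girth-$4$ argument --- whereas exploiting the triangle directly makes the whole lemma a two-line neighborhood count.
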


\begin{proof}
Start from a triangle $\{v_1, v_2, v_3\}$ with edges $c_1=v_1v_2, c_2=v_2v_3, c_3=v_3v_1$, First by Lemma \ref{lem1}, we get a partial strong edge-coloring using $36$ colors with the edges incident to $v_1$ uncolored. Now release the colors used by the edges incident to $v_2$ and edges incident to $v_3$, and we have $12$ uncolored edges. Assign colors to all the edges incident to the triangle first and then the edges on the triangle. Since any edge $e$ incident to the triangle, we have $|N(e)| = 39$, and $|F(e)|\leq 39-3 < 37$, we can greedily color it. For $i=1,2,3, |N(c_i)|= 35, |F(c_i)| \leq 35 <37$, we can also greedily color $c_1, c_2, c_3$.
\end{proof}

\subsection{Graphs with $\Delta=5$ and girth $4$}

\begin{lem}
If $G$ is a $5$-regular graph with girth $4$, then $G$ has a strong edge-coloring that uses $37$ colors.
\end{lem}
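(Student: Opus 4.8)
The plan is to mimic the girth-$5$ argument from Lemma~\ref{lemg5}, adapting it to accommodate the $4$-cycles. Start from a $4$-cycle and pick one of its vertices as the distinguished vertex $v$; by Lemma~\ref{lem1}, greedily coloring in an order compatible with $v$ yields a partial strong edge-coloring with $36$ colors leaving exactly the five edges incident to $v$ uncolored. As in the girth-$5$ proof, the goal is to free up five colors by recoloring five pairwise non-conflicting edges of $E[D_1,D_2]$ with a single new color $\alpha$ (color $37$), and then greedily recolor the five edges at $v$ using the released colors. The new feature is that girth $4$ destroys two of the structural observations used before: two vertices $w_i,w_j\in D_1$ may now share a common neighbor in $D_2$ (so the sets $W_i$ need not be disjoint), and there may be an edge between two vertices of $D_1$.

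First I would record which of observations (a)--(d) survive and in what modified form: there are still no triangles is \emph{not} available, but with girth exactly $4$ we still have no loops/multi-edges, each $w_i$ still has $\Delta-1$ neighbours in $D_2$, and the overlaps $W_i\cap W_j$ are controlled — a $C_4$ through $v$ forces $|W_i\cap W_j|\le 1$ for $i\ne j$ (two common neighbours of $w_i,w_j$ in $D_2$ would close a shorter cycle only together with $v$, so actually $|W_i\cap W_j|$ can be up to $\Delta-1$ in principle; I would need to bound the total number of such coincidences). The induced-matching requirement now reads: choose vertices $u_1,\dots,u_5$ with $u_i$ a neighbour of $w_i$, all distinct, pairwise non-adjacent, and no two of the chosen edges $w_iu_i$ sharing the common-neighbour obstruction. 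I would set this up as a greedy/ system-of-distinct-representatives selection exactly parallel to Cases~1 and~2 of Proposition~\ref{prop}, tracking at each step how many candidates in $W_k$ are killed by (i) already chosen vertices, (ii) edges inside $D_2$, and (iii) the $D_1$-adjacencies and $W$-overlaps. Since $\Delta=5$ there is numerical slack ($|W_k|=4$, and each previously chosen vertex kills at most one candidate via a $D_2$-edge), so the count should close with a small case analysis on how the $4$-cycles sit relative to $v$.

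The main obstacle I anticipate is the situation where the neighbourhoods $W_i$ overlap heavily or where several $4$-cycles are packed around $v$, concentrating the edges of $D_2$ and the $D_1$--$D_1$ edges so that some $W_k$ loses too many candidates at once; in that regime the naive greedy count $|W_k|-(\text{obstructions})\ge 1$ can fail, and one must instead choose the representatives in a cleverer order (deferring the tightly-constrained $W_k$'s, or selecting around a vertex $u\in D_2$ of high $D_2$-degree first, as in Case~2). Another delicate point is bookkeeping the released colors: the five recoloured edges of $E[D_1,D_2]$ must have used five \emph{distinct} colors in $\pi$ for the final greedy step to gain five options at $v$, which again is automatic for a strong coloring once the chosen edges form an induced matching. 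I would therefore organize the proof as: (1) restate the surviving structural observations for girth $4$; (2) prove a selection lemma producing an induced matching of size $5$ in $E[D_1,D_2]$, by the deferred-greedy argument with a short case split on the local $4$-cycle pattern; (3) recolor and finish exactly as in the proof of Lemma~\ref{lemg5}. The bulk of the work, and the only genuinely new content beyond the girth-$5$ case, is step~(2).
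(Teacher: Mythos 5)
There is a genuine gap: the selection lemma at the heart of your plan --- an induced matching of size $5$ in $E[D_1,D_2]$ consisting of one edge at each $w_i$ --- is false for girth $4$, and no reordering of the greedy selection can rescue it. Take $G=K_{5,5}$ with parts $\{v,u_1,\dots,u_4\}$ and $\{w_1,\dots,w_5\}$: this is $5$-regular with girth $4$, $D_1=\{w_1,\dots,w_5\}$, $D_2=\{u_1,\dots,u_4\}$, and $W_1=\dots=W_5=D_2$. Any two edges $w_iu_k$ and $w_ju_\ell$ with $i\ne j$ are within distance two of each other (via the edge $w_iu_\ell$), so the largest induced matching in $E[D_1,D_2]$ has size $1$. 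Your anticipated remedy for heavy overlap --- deferring the tightly constrained $W_k$'s or selecting around a high-degree vertex of $D_2$ first --- therefore cannot close the argument: the object you are trying to select does not exist. The fix the paper uses, and which your outline never states, is a dichotomy: whenever the sets $W_i$ overlap heavily (equivalently, whenever $|E(D_1\setminus\{w_i\},W_i)|$ is large), the neighborhood $N(e_i)$ itself shrinks, since $|N(e_i)|=40-|E(D_1\setminus\{w_i\},W_i)|$, and $e_i$ can then be colored greedily with no recoloring at all; only when the overlaps are sparse is an induced matching needed, and only then does one exist. In the $K_{5,5}$ example every $e_i$ has $|F(e_i)|$ far below $37$ and greedy coloring finishes immediately. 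Without both halves of this trade-off the proof does not go through.

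A secondary inaccuracy: you aim to release five colors via a size-$5$ matching, but this is more than is needed and makes the (already impossible) selection problem harder. Because $v$ is chosen on a $4$-cycle, the two edges $e_1,e_2$ whose endpoints $w_1,w_2$ share a neighbor in $D_2$ already satisfy $|N(e_i)|\le 39$, hence see at most $35$ colors while the other edges at $v$ are still uncolored and can be colored without any released color. The paper consequently only needs an induced matching of size $3$ in $E(D_1,D_2)$ (or, failing that, enough overlap to shrink every $|N(e_i)|$), and its case analysis is organized entirely around that alternative.
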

\begin{proof}
Let $G$ be a $5$-regular graph with girth $4$. Let $v$ be a vertex on a $4$-cycle of $G$. Color the edges in an order compatible with $v$, by Lemma \ref{lem1}, we get a partial strong edge-coloring with $36$ colors. 

Let $e_i=vw_i$ and $W_i=N_(w_i)\cap D_2$ where $i=1,2,3,4,5$. Suppose $w_1$ and $w_2$ have a common neighbor in $D_2$. Because girth is $4$, we have $D_1$ is independent. 
Observation:

(a) Since $|N(e_i)|=40-|E(D_1\setminus\{w_i\},W_i)|$, if $|E(D_1\setminus\{w_i\},W_i)|\ge 4$ then we can greedily color $e_i$ where $i=1,\dots,5$. 

(b) $|D_2|\ge 7$. Otherwise, we can greedily color $e_i$ where $i=1,\dots,5$. 

Since $|F(e_i)|\le 39-4=35$ for $i=1,2$ and $|F(e_i)|\le 40-4=36$ when $i=3,4,5$, we'll have a similar argument with the proof of Lemma \ref{lemg5} to show that we can reassign a new color to $3$ edges in $E(D_1, D_2)$, otherwise the neighborhood of $e_i$ where $i=1,2,3,4,5$ is small enough for us to greedily color it. If we have at least three $W_i$ that contains vertices with no neighbor in $D_2\cup W_i\setminus\{w_i\}$, then we can choose an induced matching in $E(D_1,D_2)$ of size at least three.
Otherwise we have the following cases.

\emph{Case 1. } There are two $W_i$, say $W_4, W_5$ that contains vertices with no neighbor in $D_2\cup W_i\setminus\{w_i\}$. Choose $v_4\in W_4, v_5\in W_5$ such that $N(v_i)\cap (D_2\cup W_i\setminus\{w_i\})=\emptyset$ for $i=4,5$. So we only need to choose one vertex $v_i\in W_i$ for some $i=1,2,3 $ such that $w_4v_4, w_5v_5, w_iv_i$ form an induced matching. If such vertex does not exist, then any $v\in \cup_{i=1}^3 W_i$, $v$ is adjacent to $w_1$ or $w_2$. Because $w_1$ and $w_2$ have a common neighbor in $D_2$, we have $|\cup_{i=3}^5 W_i|\le 6$. Therefore, $|E(D_1\setminus\{w_i\},W_i)|\ge 4$ for each $i=1,2,3$.

\emph{Case 2.} There one $W_i$, say $W_5$ that contains vertices with no neighbor in $D_2\cup W_i\setminus\{w_i\}$, and choose one such vertex as $v_5$. By observation (b), we have at least one vertex $v\in D_2\subset W_5$, say $v\in W_4$, then $v_5w_5$ and $vw_4$ is an induced matching. Following the same argument in Case 1, we either find an induced matching of size 3 or $|E(D_1\setminus\{w_i\},W_i)|\ge 4$ for each $i\in [5]$.

\emph{Case 3.} There is no $W_i$ that contains vertices with no neighbor in $D_2\cup W_i\setminus\{w_i\}$. Let $v$ be the common neighbor of $w_1$ and $w_2$ in $D_2$. If $v$ is adjacent to all $w_i$, then $|E(D_1\setminus\{w_i\},W_i)|\ge 4$ for each $i=1,\dots, 5$. We may assume $v$ is not adjacent to $w_5$. Since $w_i$ has at most $3$ neighbors in $W_5$ for $i=1,2$, we have either $v$ is adjacent to at least one vertex in $W_5$ or at least one of $w_1, w_2$ is adjacent to any vertex in $W_5$. In either case, we can find an induced matching of size $3$ in $E(D_1,D_2)$ otherwise $|E(D_1\setminus\{w_i\},W_i)|\ge 4$ for each $i\in [5]$. 
\end{proof}

\vspace{1pc}

\section{Acknowledgments} 
\vspace{1pc}
This paper was conducted as a project while the author was taking a class of Dr. Guantao Chen. The present author would like to thank Ilkyoo Choi and Suil Oh for the useful discussion.
\vspace{3pc}

\end{document}